\newcommand{%
    
    \import{./figures/}{.pdf_tex}
}[2][1]{%
    
    \import{./figures/}{#2.pdf_tex}
}
\newtheorem{Definition}{Definition}
\newtheorem{Remark}{Remark}
\newtheorem{Proposition}{Proposition}
\newtheorem{Corollary}{Corollary}
\newtheorem{Lemma}{Lemma}
\newtheorem{Conjecture}{Conjecture}
\providecommand{\keywords}[1]
{
  \small	
  \textbf{\textit{Keywords---}} #1
}
\providecommand{\msc}[1]
{
  \small	
  \textbf{\textit{2020 Mathematics Subject Classification---}} #1
}
\title{On the Eigenvalues of Graphs with Mixed Algebraic Structure}
\author[$\dagger$]{Riccardo Bonetto\orcidlink{0000-0001-8075-6147}}
\author[$\dagger$]{Hildeberto Jardón Kojakhmetov\orcidlink{0000-0001-8708-7409}}
\begin{document}
\affil[$\dagger$]{University of Groningen — Bernoulli Institute for Mathematics, Computer Science and Artificial Intelligence; Groningen, The Netherlands}
\maketitle

\abstract{We study some spectral properties of a matrix that is constructed as a combination of a Laplacian and an adjacency matrix of simple graphs. The matrix considered depends on a positive parameter, as such we consider the implications in different regimes of such a parameter, perturbative and beyond. Our main goal is to relate spectral properties to the graph's configuration, or to basic properties of the Laplacian and adjacency matrices. We explain the connections with dynamic networks and their stability properties, which lead us to state a conjecture for the signature.   }

\vspace{0.5cm}

\keywords{Networks, Perturbation Theory, Eigenvalues, Algebraic Graph Theory, Bifurcations, Dynamical Systems}

\vspace{0.5cm}

\msc{15A18, 15A15, 05C50, 05C70, 05C90, 47A55, 34D20}

\section{Introduction}

The study of algebraic structures associated with graphs is remarkably relevant for pure and applied mathematics. In the first place, the study of matrices associated with networks can provide information on some features of the associated graph, and vice-versa \cite{godsil2001algebraic, beineke2004topics, biggs1993algebraic}. In addition, in the realm of complex dynamical systems, the linear part of a vector field can be recognised as a matrix related to a network \cite{mirzaev2013laplacian, lambiotte2008laplacian, veerman2020primer, mugnolo2017dynamical, porter2016dynamical, acebron2005kuramoto, strogatz2000kuramoto}. In particular, when studying the stability of the critical points of a vector field, one is especially interested in the eigenvalues of the linearisation at such points. In turn, for matrices associated with graphs, it is often possible, or desirable, to have some connection between the eigenvalues and some graph properties \cite{cvetkovic2009introduction, brouwer2011spectra}. Consequently, we can relate the stability properties of complex dynamical systems to the properties of a graph.

A lot of work has been done to extend the possible algebraic structures associated with a graph, for example, accounting for directed edges or self-loops \cite{hershkowitz1994paths, tadic2001dynamics, suresh2023spectrum}, positive and negative weights \cite{zaslavsky1982signed, bronski2014spectral, akbari2018spectral}, hypergraphs \cite{banerjee2021spectrum, feng1996spectra, cooper2012spectra}, etc. In this manuscript, we study graphs whose set of edges is split into two classes. We assign to one class a Laplacian structure and to the other class an adjacency structure. We propose a matrix, $M(\epsilon)$, that is a combination of such Laplacian and adjacency matrices, where the parameter $\epsilon$ acts as a global weight for the adjacency matrix. Precise definitions will follow in the text. Having in mind the potential application to gradient systems, or Hamiltonian, the Laplacian matrix represents stable interactions, while the adjacency matrix represents unstable saddle-like interactions, see Section \ref{sec:applications}. We exploit some relations between the graphs' properties and their spectrum. In particular, we characterise the perturbative regime to provide conditions for positive definiteness, which in turn can be related to the stability of the aforementioned dynamical systems. Additionally, we provide an upper bound for the number of transitions of the signature of $M(\epsilon)$ as the parameter $\epsilon$ varies. These transitions are important, among other reasons, due to their relationship to bifurcations in dynamical systems. Motivated by the intuition inferred from the dynamics, we state a conjecture for the monotonic change of signature, which we corroborate with a numerical investigation.

The paper is arranged as follows. First, in Section \ref{sec:prel} we introduce the main object of study together with some fundamental properties relevant for the treatment. Section \ref{sec:perturbations} is dedicated to perturbative results, while in Section \ref{sec:bif} we study the signature's transitions. Later, in Section \ref{sec:applications} we exploit the connection with networked dynamical systems. In Appendix \ref{app:prod_fac} we prove a result necessary for a proof (Proposition \ref{prop:transitions}) contained in the main text.

\section{Preliminaries}\label{sec:prel}

Let $\mathcal{F}=\{ \mathcal{V}, \mathcal{E}\}$ be a simple graph \cite{west2001introduction}, where $\mathcal{V}=\{ 1, \dots, n \}$ is the set of vertices, and $\mathcal{E}$ is the set of (undirected) edges $\{i,j\} \in \mathcal{E}$.

\begin{Definition}
    The \emph{adjacency matrix}, $A_\mathcal{F}$, of a graph  $\mathcal{F}$ is the $n\times n$ matrix with components
    \begin{equation}
    (A_\mathcal{F})_{ij} :=
        \begin{cases}
            1 \ &\text{if} \ \{i,j\}\in \mathcal{E} , \\
            0 \ &\text{otherwise} .
        \end{cases}
    \end{equation}
\end{Definition}

\begin{Definition}
    The \emph{Laplacian matrix}, $L_\mathcal{F}$, of a graph  $\mathcal{F}$ is the $n\times n$ matrix with components
    \begin{equation}
    (L_\mathcal{F})_{ij} :=
        \begin{cases}
            \deg(i) \ &\text{if} \ i=j , \\
            -1  \ &\text{if} \ \{i,j\}\in \mathcal{E} , \\
            0 \ &\text{otherwise} ,
        \end{cases}
    \end{equation}
    where $ \deg(i)$ is the \emph{degree} of the node $i$, i.e., the number of edges incident to the node $i$.
\end{Definition}

\begin{Definition}\label{def:matrixM}
    Let us consider the subgraphs $\mathcal{G}=\{ \mathcal{V}, \mathcal{E}_\mathcal{G} \}$ and $\mathcal{H}=\{ \mathcal{V}, \mathcal{E}_\mathcal{H} \}$ of $\mathcal{F}$, such that $\mathcal{E}_\mathcal{G} \cap \mathcal{E}_\mathcal{H} =\emptyset $ and $\mathcal{E}_\mathcal{G} \cup \mathcal{E}_\mathcal{H} = \mathcal{E} $. We define the linear operator
\begin{equation}\label{eq:matrix_M}
    M(\epsilon):= L_\mathcal{G} + \epsilon A_\mathcal{H} ,
\end{equation}
where $L_\mathcal{G}$ is the Laplacian matrix of $\mathcal{G}$, $A_\mathcal{H}$ is the adjacency matrix of $\mathcal{H}$, and $\epsilon>0$ is a positive real parameter.
\end{Definition}

Notice that $\epsilon$ acts as global weight on the adjacency matrix. In particular, in the limit $\epsilon=0$ the matrix $M(0)$ becomes the Laplacian matrix $L_\mathcal{G}$, while for $\epsilon \gg 1$ the adjacency matrix $A_\mathcal{H}$ will take the leading role. Such a structure can appear when modelling networked systems with different interactions, in that case the parameter $\epsilon$ can be seen as modulating the \enquote{strength} between the two coupling classes. For the entirety of the paper, otherwise explicitly mentioned, the notation relative to the graphs $\mathcal{F}$, $\mathcal{G}$, $\mathcal{H}$,  and their definitions is given by Definition \ref{def:matrixM}.



\begin{Remark}
    Since $L_\mathcal{G}$ and $A_\mathcal{H}$ are symmetric $n \times n$ matrices, then $M(\epsilon)$ is an $n \times n$ symmetric matrix. Therefore the eigenvalues $\mu_{1}(\epsilon), \dots, \mu_{n}(\epsilon)$ of $M(\epsilon)$ are real, and there exists a complete basis of orthogonal eigenvectors.
\end{Remark}

We can immediately identify a couple of relevant properties of the matrix \eqref{eq:matrix_M}. The first, is that the trace does not depend on the value of the parameter $\epsilon$, but only on the number of edges of $\mathcal{G}$.
\begin{Lemma}
    The trace of $M(\epsilon)$ is given by
    \begin{equation}\label{eq:trace}
        \text{tr}(M(\epsilon))= 2 \# \mathcal{E}_\mathcal{G},
    \end{equation}
    where $\# \mathcal{E}_\mathcal{G}$ denotes the number of edges of $\mathcal{G}$.
\end{Lemma}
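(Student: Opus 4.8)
The plan is to exploit the linearity of the trace together with the structural features of the two constituent matrices. First I would write $\text{tr}(M(\epsilon)) = \text{tr}(L_\mathcal{G}) + \epsilon\,\text{tr}(A_\mathcal{H})$, which immediately reduces the claim to computing the two traces separately and, in particular, reveals why the answer cannot depend on $\epsilon$ provided the adjacency contribution vanishes.

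For the adjacency term, I would observe that since $\mathcal{H}$ is a simple graph it carries no self-loops, so $\{i,i\}\notin\mathcal{E}_\mathcal{H}$ and hence $(A_\mathcal{H})_{ii}=0$ for every $i$. Consequently $\text{tr}(A_\mathcal{H})=0$, and the entire $\epsilon$-dependent contribution drops out. This is precisely the mechanism behind the stated $\epsilon$-independence of the trace.

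For the Laplacian term, the diagonal entries are by definition the vertex degrees, so $\text{tr}(L_\mathcal{G}) = \sum_{i=1}^{n} \deg_\mathcal{G}(i)$. I would then invoke the handshake lemma: each edge of $\mathcal{G}$ is incident to exactly two vertices and therefore contributes exactly $2$ to the total degree sum, giving $\sum_{i=1}^{n} \deg_\mathcal{G}(i) = 2\,\#\mathcal{E}_\mathcal{G}$. Combining the two computations yields $\text{tr}(M(\epsilon)) = 2\,\#\mathcal{E}_\mathcal{G}$, as claimed.

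There is no genuine obstacle here; the statement is a direct consequence of the definitions. The only points meriting a moment's care are recognising that the simplicity of $\mathcal{H}$ is exactly what forces the adjacency diagonal to vanish, and correctly attributing the degrees appearing in $L_\mathcal{G}$ to the subgraph $\mathcal{G}$ rather than to the ambient graph $\mathcal{F}$, so that the count produced by the handshake lemma is indeed $\#\mathcal{E}_\mathcal{G}$.
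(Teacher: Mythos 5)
Your proof is correct and follows essentially the same route as the paper's: the paper likewise combines the tracelessness of $A_\mathcal{H}$ with the fact that $\text{tr}(L_\mathcal{G})$ equals twice the number of edges of $\mathcal{G}$. You simply spell out the details (linearity of the trace, the handshake lemma) that the paper leaves implicit.
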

\begin{proof}
    The statement follows straightforwardly from the fact that the adjacency matrix is traceless and that the trace of the Laplacian matrix is twice the number of edges of the associated graph. 
\end{proof}

The other property we are going to state is related to the symmetries. We notice that the symmetry properties of the matrix \eqref{eq:matrix_M} are in correspondence with the symmetries of the graph $\mathcal{F}$, where the edges belong now to two classes associated respectively to $\mathcal{G}$ and $\mathcal{H}$.

\begin{Definition}
    A nonsingular matrix $S$ is called a \emph{symmetry} of a matrix $M$ if $S^{-1}MS=M$, or equivalently $[M,S]=0$, where $[\cdot,\cdot]$ denotes the commutator.
\end{Definition}
The set of matrices $S$ that commute with the matrix $M$, together with the matrix product, form the symmetry group of $M$.

\begin{Definition}\label{def:aut}
    Let $\mathcal{F}$ be a simple graph. The \emph{automorphism group} $\textup{aut}(\mathcal{F})$ is the set of permutations leaving the adjacency structure of the graph $\mathcal{F}$ invariant.
\end{Definition}

Let us consider the standard permutation-matrix representation of $\textup{aut}(\mathcal{F})$ in $\mathbb{R}^n$, where $n$ is the number of vertices of $\mathcal{F}$. Since this is the only representation we consider in this paper, we do not explicitly distinguish the group and its representation, so we write $S \in \textup{aut}(\mathcal{F})$. As a consequence of Definition \ref{def:aut} we have that $[S, A_\mathcal{F}]=0$, $\forall S \in \textup{aut}(\mathcal{F})$. Similarly, the commutation relations hold also for the Laplacian matrix.

\begin{Proposition}
    The group $\textup{aut}(\mathcal{G}) \cap \textup{aut}(\mathcal{H})$ is the symmetry group of the matrix $M(\epsilon)$.
\end{Proposition}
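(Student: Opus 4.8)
The plan is to prove the two inclusions separately, working throughout within the permutation-matrix representation introduced above, so that here a ``symmetry'' means a permutation matrix $S$ (with associated permutation $\sigma$) satisfying $[M(\epsilon),S]=0$, consistent with the convention $S \in \textup{aut}(\mathcal{F})$ adopted in the paper.

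For the inclusion $\textup{aut}(\mathcal{G}) \cap \textup{aut}(\mathcal{H}) \subseteq \{\text{symmetries of } M(\epsilon)\}$, I would take $S \in \textup{aut}(\mathcal{G}) \cap \textup{aut}(\mathcal{H})$ and decompose $L_\mathcal{G} = D_\mathcal{G} - A_\mathcal{G}$, where $D_\mathcal{G}$ is the diagonal degree matrix of $\mathcal{G}$. Since $S$ is an automorphism of $\mathcal{G}$ it preserves adjacency, so $[S,A_\mathcal{G}]=0$; it also preserves degrees, i.e.\ $\deg_\mathcal{G}(\sigma(i))=\deg_\mathcal{G}(i)$, whence conjugation fixes the diagonal and $[S,D_\mathcal{G}]=0$, giving $[S,L_\mathcal{G}]=0$. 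As an automorphism of $\mathcal{H}$ it likewise gives $[S,A_\mathcal{H}]=0$. Bilinearity of the commutator then yields $[S,M(\epsilon)] = [S,L_\mathcal{G}] + \epsilon\,[S,A_\mathcal{H}] = 0$, so $S$ is a symmetry.

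For the reverse inclusion, suppose $S$ is a permutation symmetry, so that $M(\epsilon)_{\sigma(i)\sigma(j)} = M(\epsilon)_{ij}$ for all $i,j$. The key observation is that, because $\mathcal{E}_\mathcal{G} \cap \mathcal{E}_\mathcal{H} = \emptyset$, every off-diagonal entry of $M(\epsilon)$ takes exactly one of the three values $-1$ (an edge of $\mathcal{G}$), $\epsilon$ (an edge of $\mathcal{H}$), or $0$ (a non-edge), and for $\epsilon>0$ these three values are pairwise distinct. Invariance under the relabeling by $\sigma$ therefore forces each value to be mapped to itself, i.e.\ $\{i,j\}\in\mathcal{E}_\mathcal{G} \iff \{\sigma(i),\sigma(j)\}\in\mathcal{E}_\mathcal{G}$ and $\{i,j\}\in\mathcal{E}_\mathcal{H} \iff \{\sigma(i),\sigma(j)\}\in\mathcal{E}_\mathcal{H}$. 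This is precisely the statement that $\sigma$ preserves the adjacency structure of both subgraphs, i.e.\ $\sigma \in \textup{aut}(\mathcal{G})$ and $\sigma \in \textup{aut}(\mathcal{H})$, completing the equality of the two groups.

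I expect the subtle point to be the value-separation step: the whole decoupling of the two edge classes rests on the distinctness $-1 \neq \epsilon \neq 0 \neq -1$, which holds exactly because $\epsilon>0$; were $\epsilon$ allowed to vanish or to equal $-1$, a permutation could exchange $\mathcal{G}$-edges with $\mathcal{H}$-edges (or with non-edges) and the characterization would fail. I would also flag explicitly that the statement is about permutation symmetries, since the full centralizer of $M(\epsilon)$ in $GL_n(\mathbb{R})$ is in general strictly larger (for instance it contains every polynomial in $M(\epsilon)$), so the restriction to $\textup{aut}(\mathcal{F})$-type symmetries is what makes the intersection $\textup{aut}(\mathcal{G}) \cap \textup{aut}(\mathcal{H})$ the exact answer.
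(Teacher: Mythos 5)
Your proof is correct, and it does strictly more than the paper's own argument. The paper's proof consists only of your first inclusion: take $S \in \textup{aut}(\mathcal{G}) \cap \textup{aut}(\mathcal{H})$, note $[S, L_\mathcal{G}]=0$ and $[S, A_\mathcal{H}]=0$, and conclude by bilinearity of the commutator that $[S, M(\epsilon)]=0$ for all $\epsilon$; the paper asserts $[S,L_\mathcal{G}]=0$ essentially ``by definition'', whereas your decomposition $L_\mathcal{G}=D_\mathcal{G}-A_\mathcal{G}$ together with degree preservation makes that step explicit. The reverse inclusion --- that any permutation commuting with $M(\epsilon)$ must preserve both edge classes --- is not proved in the paper at all, even though the proposition claims an equality of groups. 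Your value-separation argument (for $\epsilon>0$ the off-diagonal entries $-1$, $\epsilon$, $0$ are pairwise distinct, so entry-invariance under relabeling forces $\sigma$ to map $\mathcal{E}_\mathcal{G}$ to $\mathcal{E}_\mathcal{G}$ and $\mathcal{E}_\mathcal{H}$ to $\mathcal{E}_\mathcal{H}$) supplies exactly this missing half, and your remark about where it would break ($\epsilon=0$ or $\epsilon=-1$) correctly isolates why the hypothesis $\epsilon>0$ matters. Your final caveat is also well taken: under the paper's literal Definition of symmetry group (all nonsingular matrices commuting with $M(\epsilon)$), the proposition cannot hold as stated, since the centralizer contains, for instance, every invertible polynomial in $M(\epsilon)$; the equality is true only once ``symmetry group'' is restricted to permutation symmetries, which is the reading both you and (implicitly) the paper adopt. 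In short, your first paragraph reproduces the paper's entire proof, and your second and third paragraphs add the converse direction and the precision that the paper omits.
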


\begin{proof}
 Let $S \in \textup{aut}(\mathcal{G}) \cap \textup{aut}(\mathcal{H})$ then, by definition, we have $[S, L_\mathcal{G}]=0$, $[S, A_\mathcal{H}]=0$. Therefore $[S,  L_\mathcal{G} + \epsilon A_\mathcal{H}] = [S,  L_\mathcal{G} ]+ \epsilon [S, A_\mathcal{H}]  =0$, $\forall S \in \textup{aut}(\mathcal{G}) \cap \textup{aut}(\mathcal{H})$, and $\forall \epsilon$.
\end{proof}

\begin{Proposition}
    The matrix $L_\mathcal{G}$ is semi-positive definite, with $\dim(\ker(L_\mathcal{G}))$ equal to the number of connected components of $\mathcal{G}$. Let $\mathcal{G}_1, \dots, \mathcal{G}_r$ be the connected components of $\mathcal{G}$, then the eigenvectors of the zero eigenvalues are 
    \begin{equation}
        \mathbf{1}_{\mathcal{G}_1}:=(\underbrace{1,\cdots,1}_{\# \mathcal{V}_{\mathcal{G}_1}}, 0, \cdots, 0)^\intercal, \mathbf{1}_{\mathcal{G}_2}:=(0, \cdots,0,\underbrace{1,\cdots,1}_{\# \mathcal{V}_{\mathcal{G}_2}}, 0, \cdots, 0)^\intercal, \dots, \mathbf{1}_{\mathcal{G}_r}:=(0, \cdots,0,\underbrace{1,\cdots,1}_{\# \mathcal{V}_{\mathcal{G}_r}})^\intercal ,
    \end{equation}
    where $\# \mathcal{V}_{\mathcal{G}_i}$ is the number of nodes in the connected component $\mathcal{G}_i$, $i=1, \dots, r$.
\end{Proposition}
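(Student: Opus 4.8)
The plan is to work with the quadratic form associated with $L_\mathcal{G}$, which packages both claims cleanly. First I would establish the identity
\begin{equation}
    x^\intercal L_\mathcal{G}\, x = \sum_{\{i,j\} \in \mathcal{E}_\mathcal{G}} (x_i - x_j)^2
\end{equation}
for every $x \in \R^n$, by expanding using the entrywise definition of $L_\mathcal{G}$: the diagonal contributes $\sum_i \deg(i)\, x_i^2$ and the off-diagonal entries contribute $-2\sum_{\{i,j\}\in\mathcal{E}_\mathcal{G}} x_i x_j$, and these recombine into the sum of squared differences over edges. Since the right-hand side is a sum of squares, it is nonnegative, giving semi-positive definiteness at once.

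Next I would characterize the kernel. Because $L_\mathcal{G}$ is symmetric and semi-positive definite, $L_\mathcal{G} x = 0$ holds if and only if $x^\intercal L_\mathcal{G}\, x = 0$. By the identity above, this forces $x_i = x_j$ for every edge $\{i,j\} \in \mathcal{E}_\mathcal{G}$, i.e., $x$ is constant along edges. Propagating this equality along paths shows that $x$ is constant on each connected component of $\mathcal{G}$; conversely, any such vector clearly annihilates every squared difference and hence lies in the kernel. Thus $\ker(L_\mathcal{G})$ is exactly the space of vectors that are constant on each connected component.

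It then remains to exhibit a basis. The indicator vectors $\mathbf{1}_{\mathcal{G}_1}, \dots, \mathbf{1}_{\mathcal{G}_r}$ are constant on each component and so lie in $\ker(L_\mathcal{G})$; one may also verify $L_\mathcal{G}\, \mathbf{1}_{\mathcal{G}_i} = 0$ directly, since at a node $k \in \mathcal{G}_i$ all neighbours lie in $\mathcal{G}_i$ and the degree term cancels the neighbour sum, while at a node outside $\mathcal{G}_i$ every entry vanishes. Because the components $\mathcal{G}_1, \dots, \mathcal{G}_r$ partition $\mathcal{V}$, these vectors have disjoint supports and are therefore linearly independent, and any component-wise constant vector is a linear combination of them. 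Hence they form a basis of $\ker(L_\mathcal{G})$, giving $\dim(\ker(L_\mathcal{G})) = r$. Since $L_\mathcal{G}$ is symmetric, the kernel coincides with the eigenspace of the zero eigenvalue, so $\mathbf{1}_{\mathcal{G}_1}, \dots, \mathbf{1}_{\mathcal{G}_r}$ are precisely the claimed eigenvectors.

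The only genuinely delicate point is the kernel characterization—specifically the step that turns \enquote{$x$ is constant along each edge} into \enquote{$x$ is constant on each connected component}, which relies on connectivity via path-propagation rather than on any single algebraic manipulation. Everything else is bookkeeping with the quadratic form.
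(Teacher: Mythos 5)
Your proof is correct: the quadratic-form identity, the positive-semidefiniteness, the kernel characterization via path-propagation, and the counting of the indicator vectors are all sound and complete. Note that the paper itself states this Proposition without any proof, treating it as a classical fact of algebraic graph theory; your argument is precisely the standard textbook one (see, e.g., Godsil--Royle or Biggs), so there is no divergence of approach to report — you have simply supplied the omitted details, including the one genuinely non-algebraic step (edge-wise constancy propagating to component-wise constancy), which you correctly flag as the crux.
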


\begin{Definition}
    Let $\mathcal{G}$ be a graph with $r$ connected components. The \emph{normalised eigenvectors} associated with the zero eigenvalues of $L_\mathcal{G}$ are given by
    \begin{equation}\label{eq:normalised_eig}
        \hat{ \mathbf{1}}_{\mathcal{G}_i}:= \frac{\mathbf{1}_{\mathcal{G}_i}}{\sqrt{\# \mathcal{V}_{\mathcal{G}_i}}} ,
    \end{equation}
    where $i =1, \dots, r$.
\end{Definition}

\begin{Definition}\label{def:inertia}
    The \emph{inertia} of a matrix $Q$ is the ordered triplet $\{ N_-(Q) , N_0(Q) , N_+(Q) \}$, where $N_-(Q) , N_0(Q) , N_+(Q)$ are respectively the number of negative, zero and positive eigenvalues of $Q$ counted with multiplicity.
\end{Definition}

\begin{Definition}\label{def:sign}
    The \emph{signature} of a matrix $Q$ is the number $s(Q)=N_+(Q) - N_-(Q)$.
\end{Definition}

\begin{Remark}
    For nonsingular symmetric matrices, the signature uniquely identifies the inertia and vice-versa. Moreover, thanks to Sylvester's law of inertia \cite{sylvester1852xix, parlett1998symmetric}, the signature and the inertia of a matrix are basis independent.
\end{Remark}


\subsection{Perturbation theory}\label{sec:perturbations}

In this section, we study the spectral properties of $M(\epsilon)$ in the regime of small perturbations, $0<\epsilon\ll1$. Let us expand the eigenvalues of $M(\epsilon)$ in asymptotic series, 
\begin{equation}
    \mu_i(\epsilon) =  \mu_i^{(0)} + \epsilon  \mu_i^{(1)} + \epsilon^2  \mu_i^{(2)} + \textup{h.o.t.}  ,
\end{equation}
where $i =1, \dots, n$. Notice that the zeroth-order terms are the eigenvalues of the Laplacian matrix $L_\mathcal{G}$, i.e., $\mu_i^{(0)}=\lambda_i$, where $\lambda_i \in \text{spec}(L_\mathcal{G})$. 

\begin{Proposition}\label{prop:positive_stay_positive}
    Let $\mathcal{G}$ be a graph with $n$ vertices and $r$ connected components. Then there are at least $n-r$ eigenvalues of $M(\epsilon)$ that, for $\epsilon$ sufficiently small, are positive reals. 
\end{Proposition}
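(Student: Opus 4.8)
The plan is to treat this as a continuity statement about the ordered eigenvalues of the symmetric family $M(\epsilon)$, leveraging that $M(0)=L_\mathcal{G}$ has a spectrum whose structure is already pinned down by the preceding proposition. Since $L_\mathcal{G}$ is semi-positive definite with $\dim(\ker(L_\mathcal{G}))=r$, its spectrum consists of exactly $r$ zero eigenvalues and $n-r$ strictly positive eigenvalues, counted with multiplicity. I would label the ordered eigenvalues of $L_\mathcal{G}$ as $\lambda_1 \le \dots \le \lambda_n$, so that $\lambda_1=\dots=\lambda_r=0$ and $\lambda_{r+1},\dots,\lambda_n>0$, and set $\delta := \lambda_{r+1}>0$ to be the smallest positive eigenvalue. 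Reality of all the $\mu_k(\epsilon)$ is automatic from the symmetry of $M(\epsilon)$, so the only content is the count.

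The key quantitative tool would be Weyl's inequality for symmetric perturbations: writing $M(\epsilon)=L_\mathcal{G}+\epsilon A_\mathcal{H}$ and ordering the eigenvalues $\mu_1(\epsilon)\le\dots\le\mu_n(\epsilon)$, one has $|\mu_k(\epsilon)-\lambda_k|\le \|\epsilon A_\mathcal{H}\|_2 = \epsilon\,\rho(A_\mathcal{H})$ for every $k$, where $\rho(A_\mathcal{H})$ is the spectral radius of $A_\mathcal{H}$ (equal to its spectral norm, since $A_\mathcal{H}$ is symmetric). Applying this to the top indices $k=r+1,\dots,n$ gives $\mu_k(\epsilon)\ge \lambda_k-\epsilon\,\rho(A_\mathcal{H})\ge \delta-\epsilon\,\rho(A_\mathcal{H})$.

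I would then conclude by choosing $\epsilon$ small: for every $\epsilon<\delta/\rho(A_\mathcal{H})$ (and for all $\epsilon>0$ in the degenerate case $A_\mathcal{H}=0$), the $n-r$ eigenvalues $\mu_{r+1}(\epsilon),\dots,\mu_n(\epsilon)$ are bounded below by $\delta-\epsilon\,\rho(A_\mathcal{H})>0$, hence strictly positive. This yields the asserted lower bound of $n-r$ positive eigenvalues; the word \emph{at least} accounts for the possibility that some of the $r$ eigenvalues emanating from $0$ also move into the positive half-line. The case $n=r$, where $\mathcal{G}$ carries no edges and $L_\mathcal{G}=0$, is vacuously covered since there are then no indices above $r$ to control.

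The main thing to get right — rather than a genuine obstacle — is robustness to eigenvalue multiplicity. The zero eigenvalue of $L_\mathcal{G}$ has multiplicity $r$, and the positive eigenvalues may themselves be degenerate, so an argument that differentiates individual eigenvalue branches would run into the familiar difficulties of degenerate perturbation theory. Weyl's inequality circumvents this entirely: it bounds the ordered eigenvalues directly, with no hypothesis on their separation or on the differentiability of the branches, so that only Lipschitz continuity of $\epsilon\mapsto\mu_k(\epsilon)$ is used. Consequently the asymptotic expansion set up above is not strictly needed for this particular statement, though it is fully consistent with it: the leading terms $\mu_k^{(0)}=\lambda_k$ are precisely the positive quantities that drive the bound.
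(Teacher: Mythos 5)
Your proof is correct, and it follows the same overall strategy as the paper's: $L_\mathcal{G}$ has exactly $r$ zero eigenvalues and $n-r$ positive ones, and a sufficiently small symmetric perturbation cannot drag the positive ones out of the positive half-line. The difference is the quantitative tool. The paper invokes the Gershgorin theorem (together with a reference to matrix perturbation theory), whereas you use Weyl's inequality, $|\mu_k(\epsilon)-\lambda_k|\le \epsilon\,\rho(A_\mathcal{H})$, which gives the explicit threshold $\epsilon<\lambda_{r+1}/\rho(A_\mathcal{H})$. Your choice is arguably the more robust one: Gershgorin applied naively to $M(\epsilon)$ in the standard basis does not suffice, since the disks of a Laplacian are centered at $\deg(i)$ with radius $\deg(i)$ and hence all touch the origin, and the perturbed disks even contain small negative numbers; to make the Gershgorin route rigorous one must first pass to an orthonormal eigenbasis of $L_\mathcal{G}$, where the perturbed matrix is diagonal-plus-$O(\epsilon)$ and the disks around each $\lambda_k$ have radius $O(\epsilon)$ — a step the paper leaves implicit. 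Weyl's inequality is basis-free, insensitive to eigenvalue multiplicity, and needs no branch differentiability, points you correctly emphasize; your handling of the degenerate cases ($A_\mathcal{H}=0$ and $n=r$) is also careful and consistent with the statement's ``at least $n-r$'' formulation.
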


\begin{proof}
    Since $\mathcal{G}$ has $r$ connected components, then $L_\mathcal{G}$ has $r$ repeated zero eigenvalues and $n-r$ positive eigenvalues. Thanks to Gershgorin Theorem \cite{gershgorin1931uber, stewart1990matrix}, for $\epsilon$ small enough the positive eigenvalues will remain positive.
\end{proof}

Let us notice that Proposition \ref{prop:positive_stay_positive} holds for any small symmetric perturbation of $L_\mathcal{G}$. The situation is more delicate when we study the first-order perturbation of the zero eigenvalues of $L_\mathcal{G}$. In such case, it is relevant to consider the actual structure of the perturbation $\epsilon A_\mathcal{H}$.

\begin{Corollary}\label{cor:generic_pert}
    Let $\mathcal{G}$ be a graph with $r$ connected components, $\mathcal{G}_1, \dots, \mathcal{G}_r$. Let $\mu_1(\epsilon), \dots, \mu_r(\epsilon)$ be the eigenvalues of $M(\epsilon)$ with zeroth-order term equal to zero, i.e., $\mu_1^{(0)}= \dots =\mu_r^{(0)}=0$. The first-order corrections are $\mu_1^{(1)}= \epsilon \theta_1, \dots, \mu_r^{(1)}=\epsilon \theta_r$, where $\theta_1, \dots, \theta_r$ are the eigenvalues of the matrix
    \begin{equation}\label{eq:matrix_first_order_perturbations}
        \begin{pmatrix}
             \dfrac{2\# \mathcal{E}_\mathcal{H}^{(\mathcal{G}_1)}}{\#\mathcal{V}_{\mathcal{G}_1}} & \dfrac{\# \mathcal{E}_\mathcal{H}^{(\mathcal{G}_1,\mathcal{G}_2)}}{\sqrt{\#\mathcal{V}_{\mathcal{G}_1}\#\mathcal{V}_{\mathcal{G}_2}}} & \dots & \dfrac{\# \mathcal{E}_\mathcal{H}^{(\mathcal{G}_1,\mathcal{G}_r)}}{\sqrt{\#\mathcal{V}_{\mathcal{G}_1}\#\mathcal{V}_{\mathcal{G}_r}}} \\[4ex]
            \dfrac{\# \mathcal{E}_\mathcal{H}^{(\mathcal{G}_2,\mathcal{G}_1)}}{\sqrt{\#\mathcal{V}_{\mathcal{G}_2}\#\mathcal{V}_{\mathcal{G}_1}}} & \dfrac{2\# \mathcal{E}_\mathcal{H}^{(\mathcal{G}_2)}}{\#\mathcal{V}_{\mathcal{G}_2}} & \dots & \dfrac{\# \mathcal{E}_\mathcal{H}^{(\mathcal{G}_2,\mathcal{G}_r)}}{\sqrt{\#\mathcal{V}_{\mathcal{G}_2}\#\mathcal{V}_{\mathcal{G}_r}}}  \\[4ex]
            \vdots & \vdots & \ddots & \vdots \\[4ex]
            \dfrac{\#\mathcal{E}_\mathcal{H}^{(\mathcal{G}_r,\mathcal{G}_1)}}{\sqrt{\#\mathcal{V}_{\mathcal{G}_r}\#\mathcal{V}_{\mathcal{G}_1}}} & \dfrac{\# \mathcal{E}_\mathcal{H}^{(\mathcal{G}_r,\mathcal{G}_2)}}{\sqrt{\#\mathcal{V}_{\mathcal{G}_r}\#\mathcal{V}_{\mathcal{G}_2}}} & \dots &  \dfrac{2\# \mathcal{E}_\mathcal{H}^{(\mathcal{G}_r)}}{\#\mathcal{V}_{\mathcal{G}_r}} 
        \end{pmatrix} ,
    \end{equation}
    where $\# \mathcal{E}_\mathcal{H}^{(\mathcal{G}_i)}$ is the number of edges of $\mathcal{H}$ connecting nodes of $\mathcal{G}_i$, and $\# \mathcal{E}_\mathcal{H}^{(\mathcal{G}_i,\mathcal{G}_j)}$ is the number of edges of $\mathcal{H}$ connecting a node of $\mathcal{G}_i$ to a node of $\mathcal{G}_j$.
\end{Corollary}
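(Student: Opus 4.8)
The plan is to treat the claim as an instance of first-order degenerate perturbation theory for a symmetric matrix. The unperturbed operator $M(0)=L_\mathcal{G}$ has the eigenvalue $0$ with multiplicity $r$, and its eigenspace is $V_0:=\Span\{\hat{\mathbf{1}}_{\mathcal{G}_1},\dots,\hat{\mathbf{1}}_{\mathcal{G}_r}\}$, where the normalised vectors $\hat{\mathbf{1}}_{\mathcal{G}_i}$ of \eqref{eq:normalised_eig} form an orthonormal basis (their supports are the disjoint vertex sets $\mathcal{V}_{\mathcal{G}_i}$). Since $M(\epsilon)$ is a symmetric, real-analytic (indeed affine) family in $\epsilon$, Rellich's theorem guarantees that the eigenvalues and a corresponding basis of eigenvectors can be chosen analytically in $\epsilon$; I would use this to write, for the $r$ branches emanating from the zero eigenvalue, $\mu(\epsilon)=\epsilon\theta+\textup{h.o.t.}$ and $v(\epsilon)=v^{(0)}+\epsilon v^{(1)}+\textup{h.o.t.}$ with $v^{(0)}\in V_0$.

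First I would insert these expansions into the eigenvalue equation $M(\epsilon)v(\epsilon)=\mu(\epsilon)v(\epsilon)$ and collect powers of $\epsilon$. The order-$\epsilon^0$ equation is $L_\mathcal{G}v^{(0)}=0$, which merely confirms $v^{(0)}\in V_0$, so I write $v^{(0)}=\sum_{j}c_j\hat{\mathbf{1}}_{\mathcal{G}_j}$. The order-$\epsilon^1$ equation reads $L_\mathcal{G}v^{(1)}+A_\mathcal{H}v^{(0)}=\theta v^{(0)}$. Projecting orthogonally onto $V_0$ eliminates the unknown correction $v^{(1)}$: because $L_\mathcal{G}$ is symmetric and $V_0=\ker L_\mathcal{G}$, one has $\hat{\mathbf{1}}_{\mathcal{G}_i}^\intercal L_\mathcal{G}v^{(1)}=(L_\mathcal{G}\hat{\mathbf{1}}_{\mathcal{G}_i})^\intercal v^{(1)}=0$ for every $i$. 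What remains is the secular equation $Wc=\theta c$, where $W$ is the $r\times r$ matrix with entries $W_{ij}=\hat{\mathbf{1}}_{\mathcal{G}_i}^\intercal A_\mathcal{H}\hat{\mathbf{1}}_{\mathcal{G}_j}$; hence the admissible first-order slopes $\theta$ are exactly the eigenvalues of $W$.

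It then remains to identify $W$ with the matrix \eqref{eq:matrix_first_order_perturbations} by a direct edge count. Using \eqref{eq:normalised_eig},
\begin{equation}
    W_{ij}=\frac{1}{\sqrt{\#\mathcal{V}_{\mathcal{G}_i}\,\#\mathcal{V}_{\mathcal{G}_j}}}\,\mathbf{1}_{\mathcal{G}_i}^\intercal A_\mathcal{H}\,\mathbf{1}_{\mathcal{G}_j}=\frac{1}{\sqrt{\#\mathcal{V}_{\mathcal{G}_i}\,\#\mathcal{V}_{\mathcal{G}_j}}}\sum_{k\in\mathcal{V}_{\mathcal{G}_i}}\sum_{l\in\mathcal{V}_{\mathcal{G}_j}}(A_\mathcal{H})_{kl}.
\end{equation}
The double sum counts ordered pairs of $\mathcal{H}$-adjacent vertices, one in $\mathcal{V}_{\mathcal{G}_i}$ and one in $\mathcal{V}_{\mathcal{G}_j}$. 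For $i\neq j$ the two index sets are disjoint, so each such edge is counted once, giving $\#\mathcal{E}_\mathcal{H}^{(\mathcal{G}_i,\mathcal{G}_j)}$; for $i=j$ every edge of $\mathcal{H}$ internal to $\mathcal{G}_i$ contributes through both of its orderings, giving $2\#\mathcal{E}_\mathcal{H}^{(\mathcal{G}_i)}$. Substituting these values reproduces \eqref{eq:matrix_first_order_perturbations} precisely.

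The step I expect to be the main obstacle is the rigorous justification of the degenerate expansion, i.e.\ guaranteeing that the $r$ branches are genuinely analytic and that their limiting eigenvectors are the ones diagonalising $W$ rather than an arbitrary basis of $V_0$. Invoking Rellich's and Kato's analytic perturbation theory for self-adjoint families handles both points simultaneously: analyticity legitimises the power-series ansatz, and the self-adjoint reduction on $V_0$ forces the zeroth-order eigenvectors to be eigenvectors of $W$. A subtlety worth a remark is that if $W$ itself has repeated eigenvalues the splitting of the corresponding branches is resolved only at higher order, but this does not affect the first-order values $\theta_1,\dots,\theta_r$ claimed here.
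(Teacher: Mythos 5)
Your proposal is correct and follows essentially the same route as the paper: both reduce the degenerate zero eigenvalue to the secular problem $\det\left( \langle A_\mathcal{H} \hat{\mathbf{1}}_{\mathcal{G}_i}, \hat{\mathbf{1}}_{\mathcal{G}_j} \rangle - \theta \delta_{ij} \right)=0$ and then identify the entries by counting $\mathcal{H}$-edges within and between components. The only difference is presentational: the paper outsources the secular equation to the cited sensitivity-analysis results of Seyranian et al., whereas you derive it directly from the power-series ansatz, projection onto $\ker L_\mathcal{G}$, and Rellich/Kato analyticity, and you write out the edge-counting computation that the paper leaves implicit.
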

\begin{proof}
    We need to find the first-order correction for the repeated zero eigenvalues of $L_\mathcal{G}$. Since the matrix $L_\mathcal{G}$ is symmetric we have $r$ distinct orthogonal eigenvectors associated with the zero eigenvalues \eqref{eq:normalised_eig}. So, by applying the results of \cite{seyranian1993sensitivity} for weak interactions, we find that the equation to be solved is 
    \begin{equation}
        \det\left( \langle A_\mathcal{H} \hat{ \mathbf{1}}_{\mathcal{G}_i}, \hat{ \mathbf{1}}_{\mathcal{G}_j} \rangle - \theta \delta_{ij} \right) ,
    \end{equation}
    where $i,j = 1, \dots, r$, $\delta_{ij}$ is the Kronecker delta, and $\langle \cdot , \cdot  \rangle$ is the standard scalar product in $\mathbb{R}^n$. By computing explicitly the matrix elements $\langle A_\mathcal{H} \hat{ \mathbf{1}}_{\mathcal{G}_i}, \hat{ \mathbf{1}}_{\mathcal{G}_j} \rangle$ we retrieve the matrix \eqref{eq:matrix_first_order_perturbations}.
\end{proof}

\begin{Lemma}\label{lm:two_cc}
    Let $\mathcal{G}$ be a graph with two connected components, $\mathcal{G}_1$ and $\mathcal{G}_2$. For $\epsilon$ sufficently small, the matrix $M(\epsilon)$ is positive definite iff
    \begin{equation}
        \left(\# \mathcal{E}_\mathcal{H}^{(\mathcal{G}_1,\mathcal{G}_2)}\right)^2< 4 \# \mathcal{E}_\mathcal{H}^{(\mathcal{G}_1)} \# \mathcal{E}_\mathcal{H}^{(\mathcal{G}_2)}.
    \end{equation}
\end{Lemma}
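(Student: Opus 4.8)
The plan is to decide positive definiteness of $M(\epsilon)$ for small $\epsilon$ by a two–dimensional sign analysis. By Proposition \ref{prop:positive_stay_positive}, with $r=2$ there are $n-2$ eigenvalues of $M(\epsilon)$ whose zeroth–order term is a strictly positive eigenvalue of $L_\mathcal{G}$, and these remain positive for $\epsilon$ sufficiently small. Hence positive definiteness is governed entirely by the two eigenvalues $\mu_1(\epsilon),\mu_2(\epsilon)$ that bifurcate from the double zero eigenvalue of $L_\mathcal{G}$, whose kernel is spanned by $\hat{\mathbf{1}}_{\mathcal{G}_1},\hat{\mathbf{1}}_{\mathcal{G}_2}$. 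So the first step reduces the problem to controlling the signs of these two branches.

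Next I would specialise Corollary \ref{cor:generic_pert} to $r=2$, which identifies the first–order corrections as the eigenvalues $\theta_1,\theta_2$ of the symmetric matrix
\begin{equation}
B=\begin{pmatrix} \dfrac{2\#\mathcal{E}_\mathcal{H}^{(\mathcal{G}_1)}}{\#\mathcal{V}_{\mathcal{G}_1}} & \dfrac{\#\mathcal{E}_\mathcal{H}^{(\mathcal{G}_1,\mathcal{G}_2)}}{\sqrt{\#\mathcal{V}_{\mathcal{G}_1}\#\mathcal{V}_{\mathcal{G}_2}}} \\[2ex] \dfrac{\#\mathcal{E}_\mathcal{H}^{(\mathcal{G}_1,\mathcal{G}_2)}}{\sqrt{\#\mathcal{V}_{\mathcal{G}_1}\#\mathcal{V}_{\mathcal{G}_2}}} & \dfrac{2\#\mathcal{E}_\mathcal{H}^{(\mathcal{G}_2)}}{\#\mathcal{V}_{\mathcal{G}_2}} \end{pmatrix},
\end{equation}
so that $\mu_i(\epsilon)=\epsilon\,\theta_i+O(\epsilon^2)$. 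Whenever $\theta_i\neq0$ the sign of $\mu_i(\epsilon)$ for small $\epsilon$ equals the sign of $\theta_i$; therefore, in this generic situation, $M(\epsilon)$ is positive definite for small $\epsilon$ if and only if $\theta_1,\theta_2>0$, i.e. if and only if $B$ is positive definite.

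The core computation is then the standard $2\times2$ criterion: a symmetric $2\times2$ matrix is positive definite iff its trace and determinant are both positive. I would evaluate
\begin{equation}
\det B=\frac{4\#\mathcal{E}_\mathcal{H}^{(\mathcal{G}_1)}\#\mathcal{E}_\mathcal{H}^{(\mathcal{G}_2)}-\left(\#\mathcal{E}_\mathcal{H}^{(\mathcal{G}_1,\mathcal{G}_2)}\right)^2}{\#\mathcal{V}_{\mathcal{G}_1}\#\mathcal{V}_{\mathcal{G}_2}},
\end{equation}
so that $\det B>0$ is precisely the claimed inequality. The trace condition is automatic: $\mathrm{tr}\,B\ge0$ always, and if both diagonal entries vanished then $\det B=-\left(\#\mathcal{E}_\mathcal{H}^{(\mathcal{G}_1,\mathcal{G}_2)}\right)^2/(\#\mathcal{V}_{\mathcal{G}_1}\#\mathcal{V}_{\mathcal{G}_2})\le0$, so whenever $\det B>0$ we also have $\mathrm{tr}\,B>0$. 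This settles the forward implication, and also the converse when $\det B<0$, where $B$ has one strictly negative eigenvalue, forcing a negative branch $\mu_i(\epsilon)<0$ and hence failure of positive definiteness.

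The delicate step, and the main obstacle, is the boundary case $\det B=0$ (equality in the stated inequality), where one eigenvalue of $B$ vanishes and the first-order term no longer determines the sign of the corresponding $\mu_i(\epsilon)$. To retain the clean strict inequality I would push to second order along the degenerate direction. After diagonalising $A_\mathcal{H}$ on $\ker(L_\mathcal{G})$, the second-order correction to the vanishing branch is $-\sum_{\lambda_k>0}|\langle A_\mathcal{H}v,w_k\rangle|^2/\lambda_k$, where $v$ is the associated kernel eigenvector and the $w_k$ are eigenvectors of $L_\mathcal{G}$ with positive eigenvalue $\lambda_k$. Every denominator being positive, this correction is nonpositive, so $\mu_i(\epsilon)\le0$ for small $\epsilon$ and $M(\epsilon)$ is not positive definite. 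This higher-order check is the one place where first-order data is insufficient, and it is exactly what pins down the strict inequality; the remainder is the linear-algebraic reduction above.
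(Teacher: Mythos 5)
Your proof follows the same route as the paper's: both arguments reduce, via Corollary \ref{cor:generic_pert} (together with Proposition \ref{prop:positive_stay_positive} for the $n-2$ eigenvalues that stay positive), to deciding when the $2\times 2$ first-order matrix $B$ is positive definite. Whether one does this by the trace--determinant criterion, as you do, or by writing out the eigenvalues explicitly, as the paper does, is immaterial. You actually go further than the paper: its proof stops at the observation that $\theta_2>0$ iff $c^2<ab$ and never addresses the degenerate case $\det B=0$, where first-order information cannot determine the sign of the vanishing branch, so your attempt to treat that boundary case is a genuine improvement in rigour.

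However, your boundary-case argument contains one invalid inference: from ``the second-order correction is nonpositive'' you conclude ``$\mu_i(\epsilon)\le 0$ for small $\epsilon$.'' That implication fails when the second-order coefficient is exactly zero: a series $0\cdot\epsilon+0\cdot\epsilon^{2}+\epsilon^{3}+\cdots$ is positive for small $\epsilon>0$, so nonpositivity of a single coefficient proves nothing by itself. The gap is easy to close. If $-\sum_{\lambda_k>0}|\langle A_\mathcal{H}v,w_k\rangle|^{2}/\lambda_k$ vanishes, then $A_\mathcal{H}v$ is orthogonal to every eigenvector of $L_\mathcal{G}$ with positive eigenvalue; it is also orthogonal to $\hat{\mathbf{1}}_{\mathcal{G}_1}$ and $\hat{\mathbf{1}}_{\mathcal{G}_2}$, because those inner products are the components of $B(\alpha,\beta)^\intercal=0$ when $v=\alpha\hat{\mathbf{1}}_{\mathcal{G}_1}+\beta\hat{\mathbf{1}}_{\mathcal{G}_2}$ spans $\ker B$. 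Hence $A_\mathcal{H}v=0$, so $M(\epsilon)v=0$ for all $\epsilon$ and $M(\epsilon)$ is singular, not positive definite. Cleaner still, you can skip perturbation theory for this case entirely: with $(\alpha,\beta)^\intercal\in\ker B$ nonzero and $v$ as above, $v^\intercal M(\epsilon)v=v^\intercal L_\mathcal{G}v+\epsilon\,(\alpha,\beta)B(\alpha,\beta)^\intercal=0$ for every $\epsilon$, which already contradicts positive definiteness (and also covers the fully degenerate sub-case $B=0$, where the first-order splitting your second-order formula relies on is absent).
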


\begin{proof}
    For a graph $\mathcal{G}$ with two connected components the matrix \eqref{eq:matrix_first_order_perturbations} reduces to
    \begin{equation}
        \begin{pmatrix}\label{eq:two_dim_matrix}
             \frac{2\# \mathcal{E}_\mathcal{H}^{(\mathcal{G}_1)}}{\#\mathcal{V}_{\mathcal{G}_1}} & \frac{\mathcal{E}_\mathcal{H}^{(\mathcal{G}_1,\mathcal{G}_2)}}{\sqrt{\#\mathcal{V}_{\mathcal{G}_1}\#\mathcal{V}_{\mathcal{G}_2}}} \\
            \frac{\mathcal{E}_\mathcal{H}^{(\mathcal{G}_2,\mathcal{G}_1)}}{\sqrt{\#\mathcal{V}_{\mathcal{G}_2}\#\mathcal{V}_{\mathcal{G}_1}}} & \frac{2\# \mathcal{E}_\mathcal{H}^{(\mathcal{G}_2)}}{\#\mathcal{V}_{\mathcal{G}_2}}
        \end{pmatrix} =: 
        \begin{pmatrix}
            a & c \\
            c & b
        \end{pmatrix}.
    \end{equation}
    The eigenvalues of \eqref{eq:two_dim_matrix} are 
    \begin{equation}
        \theta_{1,2} = \frac{a+b \pm \sqrt{(a-b)^2 + 4 c^2}}{2} .
    \end{equation}
    Clearly, the first eigenvalue, $\theta_1= (a+b + \sqrt{(a-b)^2 + 4 c^2})/2$, is always positive as all the terms are positive. The second eigenvalue can be rewritten as $\theta_2= (a+b - \sqrt{(a+b)^2 - 4 (ab- c^2)})/2$, which is positive only if $c^2<ab$. The statement follows.
\end{proof}

\begin{Corollary}\label{cor:one_cc}
    Let $\mathcal{G}$ be a connected graph and $\mathcal{E}_\mathcal{H}$ non-empty, then, for $\epsilon$ sufficiently small, the matrix $M(\epsilon)$ is positive definite. Moreover, the first-order expansion of the zero eigenvalue of $L_\mathcal{G}$ is
    \begin{equation}
        \mu_0(\epsilon) = \epsilon \frac{2 \#\mathcal{E}_\mathcal{H}}{n} + O(\epsilon^2) .
    \end{equation}
\end{Corollary}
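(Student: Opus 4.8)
The plan is to combine the two immediately preceding perturbative results. Since $\mathcal{G}$ is connected, it has exactly one connected component, so $r=1$, and by Proposition \ref{prop:positive_stay_positive} the $n-1$ positive eigenvalues of $L_\mathcal{G}$ remain positive for $\epsilon$ sufficiently small. It then remains only to control the single eigenvalue of $M(\epsilon)$ whose zeroth-order term is the simple zero eigenvalue of $L_\mathcal{G}$, whose normalised eigenvector is $\mathbf{1}/\sqrt{n}$.

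First I would specialise Corollary \ref{cor:generic_pert} to the case $r=1$. The matrix \eqref{eq:matrix_first_order_perturbations} then collapses to the $1\times 1$ scalar $2\# \mathcal{E}_\mathcal{H}^{(\mathcal{G}_1)}/\#\mathcal{V}_{\mathcal{G}_1}$. Because $\mathcal{G}$ is connected we have $\mathcal{G}_1=\mathcal{G}$, hence $\#\mathcal{V}_{\mathcal{G}_1}=n$, and since there are no other components for edges of $\mathcal{H}$ to bridge, every edge of $\mathcal{H}$ is internal, i.e. $\# \mathcal{E}_\mathcal{H}^{(\mathcal{G}_1)}=\#\mathcal{E}_\mathcal{H}$. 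This gives $\theta_1 = 2\#\mathcal{E}_\mathcal{H}/n$ and therefore the claimed expansion $\mu_0(\epsilon)=\epsilon\, 2\#\mathcal{E}_\mathcal{H}/n + O(\epsilon^2)$.

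To conclude positive definiteness I would observe that, since $\mathcal{E}_\mathcal{H}$ is non-empty, $\#\mathcal{E}_\mathcal{H}\geq 1$, so the first-order coefficient $2\#\mathcal{E}_\mathcal{H}/n$ is strictly positive. Hence for $\epsilon>0$ small enough the formerly zero eigenvalue becomes strictly positive, while the remaining $n-1$ eigenvalues are already positive by the first step. All $n$ eigenvalues of the symmetric matrix $M(\epsilon)$ are then positive, which is exactly positive definiteness.

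There is no substantial obstacle here: the result is essentially a bookkeeping specialisation of Corollary \ref{cor:generic_pert} together with Proposition \ref{prop:positive_stay_positive}. The only point deserving care is the non-degeneracy of the first-order term — one must ensure the perturbed zero eigenvalue actually moves into the positive half-line rather than stalling at $O(\epsilon^2)$, which is guaranteed precisely by $\mathcal{E}_\mathcal{H}\neq\emptyset$ forcing $\theta_1>0$. One should also note that the threshold on $\epsilon$ can be taken uniformly for all $n$ eigenvalues simultaneously, so that positive definiteness holds on a genuine interval $(0,\epsilon_0)$.
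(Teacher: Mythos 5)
Your proof is correct and takes essentially the same route as the paper, whose proof simply invokes ``the one-dimensional component case of Corollary \ref{cor:generic_pert}'' (equivalently, a Rayleigh-quotient computation $\langle A_\mathcal{H}\hat{\mathbf{1}},\hat{\mathbf{1}}\rangle = 2\#\mathcal{E}_\mathcal{H}/n$). Your write-up merely makes explicit the bookkeeping the paper leaves implicit, namely Proposition \ref{prop:positive_stay_positive} for the $n-1$ nonzero eigenvalues and the strict positivity of the first-order coefficient forced by $\mathcal{E}_\mathcal{H}\neq\emptyset$.
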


\begin{proof}
    The statement follows from the explicit computation of a Rayleigh quotient \cite{parlett1998symmetric}, or simply from the one-dimensional component case of Corollary \ref{cor:generic_pert}.
\end{proof}


\subsection{Bifurcation of eigenvalues}\label{sec:bif}

When the parameter $\epsilon$ is not restricted to small values, the study of the eigenvalues of $M(\epsilon)$ becomes more complicated. Indeed, by varying $\epsilon$ in the domain of positive reals, one can ask how many times the eigenvalues change sign. Such information is encoded in the inertia and in the signature of the matrix $M(\epsilon)$, see Definitions \ref{def:inertia}, \ref{def:sign}. In the following proposition, we provide an upperbound on the changes of signs for the eigenvalues of $M(\epsilon)$.

\begin{Proposition}\label{prop:transitions}
    Let $\epsilon>0$, and $s(M(\epsilon)) = N_+(M(\epsilon)) - N_-(M(\epsilon))$ be the signature of the matrix $M(\epsilon)$. The maximum number of transitions of signature is
    \begin{equation}\label{eq:upperbound}
        n-\dim(\ker(A_{\mathcal{H}})) .
    \end{equation}
\end{Proposition}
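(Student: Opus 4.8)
The plan is to reduce the counting of signature transitions to counting the positive real roots of a single polynomial, namely $p(\epsilon) := \det M(\epsilon)$, and then to bound the degree of that polynomial by $\operatorname{rank}(A_\mathcal{H}) = n - \dim(\ker(A_\mathcal{H}))$. First I would argue that the signature can change only at values of $\epsilon$ where $M(\epsilon)$ is singular. Since $M(\epsilon)$ is symmetric and depends affinely (hence analytically) on $\epsilon$, its eigenvalues $\mu_1(\epsilon),\dots,\mu_n(\epsilon)$ vary continuously. The signature $s(M(\epsilon)) = N_+ - N_-$ is integer-valued, so it can jump only when some eigenvalue $\mu_i(\epsilon)$ passes through $0$; at such an $\epsilon$ one has $N_0(M(\epsilon)) > 0$, equivalently $\det M(\epsilon) = 0$. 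Consequently every transition of the signature occurs at a positive root of $p$, and the number of transitions is bounded above by the number of distinct roots of $p$ in $(0,\infty)$.

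The core step is the degree bound $\deg_\epsilon p \le n - \dim(\ker(A_\mathcal{H}))$. I would expand $p(\epsilon) = \det(L_\mathcal{G} + \epsilon A_\mathcal{H})$ using multilinearity of the determinant in its columns: writing the $j$-th column as $(L_\mathcal{G})_{\cdot j} + \epsilon (A_\mathcal{H})_{\cdot j}$ and distributing gives
\[
 p(\epsilon) = \sum_{S \subseteq \{1,\dots,n\}} \epsilon^{|S|}\, \det C^S ,
\]
where $C^S$ is the matrix whose $j$-th column equals $(A_\mathcal{H})_{\cdot j}$ for $j \in S$ and $(L_\mathcal{G})_{\cdot j}$ otherwise. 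If $|S| > \operatorname{rank}(A_\mathcal{H})$, then the $|S|$ columns drawn from $A_\mathcal{H}$ are linearly dependent, whence $\det C^S = 0$. Therefore the coefficient of $\epsilon^k$ vanishes for every $k > \operatorname{rank}(A_\mathcal{H})$, so $\deg_\epsilon p \le \operatorname{rank}(A_\mathcal{H}) = n - \dim(\ker(A_\mathcal{H}))$. This column-wise minor expansion is precisely the factorization statement I would isolate in Appendix \ref{app:prod_fac}. A nonzero real polynomial of degree $d$ has at most $d$ real roots, so $p$ has at most $n - \dim(\ker(A_\mathcal{H}))$ positive roots, which yields the claimed bound.

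The main obstacle is making the degree bound fully rigorous and cleanly dispatching the degenerate cases. For the degree bound the linear-dependence argument above must be formalized (this is the appendix result). For the edge cases: an eigenvalue may touch $0$ without changing sign, producing a root of $p$ that does not correspond to a genuine transition; several eigenvalues may cross simultaneously at one value of $\epsilon$, which is still a single root location; and one must exclude the degenerate possibility $p \equiv 0$, which would force a common kernel vector of $L_\mathcal{G}$ and $A_\mathcal{H}$ persisting for all $\epsilon$, before invoking the root count. I expect none of these to weaken the inequality, since each only decreases the effective number of transitions relative to the count of roots; the argument therefore terminates by observing that the number of transition locations is at most the number of distinct positive roots of $p$, hence at most $n - \dim(\ker(A_\mathcal{H}))$.
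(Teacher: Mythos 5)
Your proposal is correct in outline and follows the same overall strategy as the paper: both reduce signature transitions to positive roots of $p(\epsilon)=\det M(\epsilon)$ and then bound $\deg_\epsilon p$ by $n-\dim\ker(A_\mathcal{H})$. Where you genuinely differ is in how the degree bound is obtained, and your route is the cleaner one. The paper expands $\det M(\epsilon)$ via the Leibniz permutation formula, factors each product $\prod_{i}\bigl((L_\mathcal{G})_{i\sigma(i)}+\epsilon (A_\mathcal{H})_{i\sigma(i)}\bigr)$ in powers of $\epsilon$ (this is the purpose of Appendix \ref{app:prod_fac}), and then asserts that a nonzero coefficient of $\epsilon^r$ requires $A_\mathcal{H}$ to have $r$ linearly independent eigenvectors. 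As written, that inference is loose: nonvanishing of entrywise products $(A_\mathcal{H})_{k_1\sigma(k_1)}\cdots (A_\mathcal{H})_{k_r\sigma(k_r)}$ is a statement about individual entries, not about rank or eigenvectors, and the link is never actually established. Your column-multilinearity expansion supplies exactly the missing argument: the coefficient of $\epsilon^{k}$ is $\sum_{|S|=k}\det C^S$, each $C^S$ contains $k$ columns of $A_\mathcal{H}$, and if $k>\operatorname{rank}(A_\mathcal{H})$ those columns are linearly dependent, so every summand vanishes. The symmetry of $A_\mathcal{H}$ enters only to convert $\operatorname{rank}(A_\mathcal{H})$ into $n-\dim\ker(A_\mathcal{H})$. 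This makes your argument rigorous precisely at the point where the paper's proof is weakest, and it bypasses the combinatorial bookkeeping of the appendix altogether.

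The one loose end in your sketch --- shared with, and in fact ignored by, the paper --- is the degenerate case $p\equiv 0$, which can genuinely occur under the stated hypotheses (an isolated vertex of $\mathcal{F}$, or more generally a connected component of $\mathcal{G}$ whose vertices meet no edge of $\mathcal{H}$). Your parenthetical claim that $p\equiv 0$ forces a common kernel vector of $L_\mathcal{G}$ and $A_\mathcal{H}$ is true, but it needs proof and it uses positive semidefiniteness of $L_\mathcal{G}$: take a real-analytic null branch $v(\epsilon)$ of $M(\epsilon)$ (Rellich); differentiating $M(\epsilon)v(\epsilon)=0$ in $\epsilon$ and pairing with $v(\epsilon)$ gives $v^\intercal A_\mathcal{H} v=0$, hence $v^\intercal L_\mathcal{G} v=0$, hence $L_\mathcal{G}v=0$ by semidefiniteness, and then $A_\mathcal{H}v=0$ since $\epsilon>0$. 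Once this is in place, you should not merely \enquote{exclude} the case but finish it: restrict $M(\epsilon)$ to the invariant subspace $\bigl(\ker L_\mathcal{G}\cap\ker A_\mathcal{H}\bigr)^{\perp}$, on which the signature is unchanged and the determinant is not identically zero (otherwise the same argument would produce a common kernel vector orthogonal to the common kernel), and apply your degree bound there; the bound is still $\operatorname{rank}(A_\mathcal{H})=n-\dim\ker(A_\mathcal{H})$. With that addendum your proof is complete; without it, your argument (like the paper's) covers only the case $p\not\equiv 0$.
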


\begin{proof}
    The signature can change only if the matrix $M(\epsilon)$ is singular for some value of $\epsilon$. In turn, $M(\epsilon)$ is singular if $\det(M(\epsilon))=0$. In order to find an upper bound in the number of transitions of the signature, we need to study the maximal power in $\epsilon$ of the polynomial $\det(M(\epsilon))$. Let $M_{ij}(\epsilon)$ be the components of the matrix $M(\epsilon)$, then the determinant can be written as
    \begin{equation}
        \det(M(\epsilon)) = \sum_{\sigma \in \mathbb{S}_n} \text{sgn}(\sigma) \prod_i^n \left(M(\epsilon)\right)_{i\sigma(i)} ,
    \end{equation}
    where $\mathbb{S}_n$ is the symmetric group of permutations over $n$ elements, and $\text{sgn}(\sigma)$ is the sign of the permutation $\sigma$. We recall that $M(\epsilon)$ is, by definition, $M(\epsilon)=L_\mathcal{G} + \epsilon A_\mathcal{H}$. So, we have
    \begin{align}
        \det(M(\epsilon)) = \sum_{\sigma \in \mathbb{S}_n} \text{sgn}(\sigma) \prod_i^n \left( \left({L_\mathcal{G}}\right)_{i\sigma(i)} + \epsilon \left({A_\mathcal{H}}\right)_{i\sigma(i)} \right)  . 
    \end{align}
    We expand the product following the factorisation described in Appendix \ref{app:prod_fac}, obtaining 
    \begin{align} 
    &\det({L_\mathcal{G}}) +    \\
        & + \sum_{\sigma \in \mathbb{S}_n} \text{sgn}(\sigma) \bigg( \sum_{r=1}^{n-1} \epsilon^r C_{n,r}\left( \left({L_\mathcal{G}}\right)_{1\sigma(1)}, \dots,  \left({L_\mathcal{G}}\right)_{n\sigma(n)};  \left({A_\mathcal{H}}\right)_{1\sigma(1)}, \dots, \left({A_\mathcal{H}}\right)_{n\sigma(n)} \right) \bigg)  + \\
        &+ \epsilon^n  \det(A_\mathcal{H}) ,
    \end{align}
    where $C_{n,r}$ is defined in \eqref{eq:c_def}, and roughly speaking it is the sum of the non-repeated combination of products where the terms $\left({L_\mathcal{G}}\right)_{i\sigma(i)}$ appear $n-r$ times and  $\left({A_\mathcal{H}}\right)_{j\sigma(j)}$ appear $r$ times, $i\neq j$.
    We recall that the Laplacian matrix of a simple graph is singular, so $\det({L_\mathcal{G}})=0$, and therefore one zero is always at $\epsilon=0$. This fact is easy to check also without the above expansion. In order to have a non-zero term with power $r$ in $\epsilon$ it is necessary that at least one term of the form
    \begin{equation}
        \left({A_\mathcal{H}}\right)_{k_1\sigma(k_1)} \cdots \left({A_\mathcal{H}}\right)_{k_r\sigma(k_r)} ,
    \end{equation}
    where $k_1, \dots ,k_r$ are non-repeated combinations of elements in $\{1, \dots, n \}$, is non-zero. 
    %
    %
    For that to be true, the matrix $A_\mathcal{H}$ needs to have $r$ linearly independent eigenvectors. So, the maximum power of $\epsilon$ is bounded by the number of linearly independent eigenvectors of $A_\mathcal{H}$, which in turn is equal to the number of non-zero eigenvalues of $A_\mathcal{H}$, i.e., $n- \dim(\ker(A_\mathcal{H}))$. 
\end{proof}
\section{Qualitative Study of Linear Vector Fields}\label{sec:applications}

The treatment carried out in this paper is also related to the study of networked dynamical systems. Let us assign to each edge of the graph $\mathcal{F}$ a potential function depending on the position coordinates associated to the connected vertices, i.e., $\{i,j\} \mapsto V(q_i, q_j)$. The potential represents the interaction between the state variables defined on the vertex set. The total potential is given by
\begin{equation}
    V_\mathcal{F} := \sum_{\{i,j\} \in \mathcal{E}} V(q_i, q_j) .
\end{equation}
We consider, for example, gradient vector fields
\begin{equation}\label{eq:grad}
    \dot{q} = - \nabla_q V_\mathcal{F} ,
\end{equation}
where $\nabla_{q}:=(\partial/\partial_{q_1}, \dots, \partial/\partial_{q_n})^\intercal$ is a differential operator. 
%
The vector field \eqref{eq:grad} inherits some properties and relations to the graph structure on which it is defined. Following the construction outlined at the beginning of the paper, Section \ref{sec:prel}, we split the interaction into two parts
\begin{equation}
V(q_i, q_j) = 
    \begin{cases}
        \frac{1}{2} (q_i - q_j)^2 \ &\text{if} \ \{i,j\} \in \mathcal{E}_\mathcal{G} , \\
       \epsilon q_i q_j \ &\text{if} \ \{i,j\} \in \mathcal{E}_\mathcal{H} .
    \end{cases}
\end{equation}
Now, if we explicitly compute equation \eqref{eq:grad} we get
\begin{align}\label{eq:eq_exp}
    \dot{q} &= - M(\epsilon) q,
\end{align}
where the matrix $M(\epsilon)$ is the one defined in \eqref{eq:matrix_M}, which reads $M(\epsilon)=L_\mathcal{G} + \epsilon A_\mathcal{H}$. Let us call the interactions of the form $ (q_i - q_j)^2/2$ \emph{diffusive}, while the interactions of the form $q_i q_j$ we call \emph{saddle}. Such terminology comes from the dynamical behaviour of the interactions when analysed separately. On the one hand, diffusive interactions give rise to the Laplacian matrix, that represent a linear diffusion process on the graph. For a connected graph, the system subject to diffusion converges to the mean value of the initial conditions. So, the interaction is stable and attracting, with a line of of equilibria representing the \enquote{thermalisation} values. Note that the line of equilibria is given by the eigenspace associated to the zero eigenvalue of the Laplacian. On the other hand, the term saddle is used to classify the behaviour of systems that have both positive and negative eigenvalues. Clearly, the adjacency matrix being traceless has positive and negative eigenvalues. The behaviour close to a saddle is generically unstable, as the only stable lines are given by the eigenspaces associated with the positive eigenvalues of the adjacency matrix.

An interesting and surprising consequence of Corollary \ref{cor:generic_pert}, Lemma \ref{lm:two_cc}, and Corollary \ref{cor:one_cc} is that, under some conditions related to the graph structure, the addition of \emph{weak}, i.e., $\epsilon\ll 1$, saddle interactions can further stabilise the system. As a particular application of Lemma \ref{lm:two_cc}, we have the following:

\begin{Proposition}\label{prop:2dyn}
 Let $\mathcal F$ be a simple graph and $\{\mathcal G,\mathcal H\}$ subgraphs of $\mathcal F$ as defined in Section \ref{sec:prel}. Let $\mathcal G$ have two connected components, namely $\mathcal{G}_1=\{\mathcal V_{\mathcal{G}_1},\mathcal E_{\mathcal{G}_1}\}$ and $\mathcal{G}_2=\{\mathcal V_{\mathcal{G}_2},\mathcal E_{\mathcal{G}_2}\}$ and $\mathcal E_{\mathcal H}$ be non-empty. Then:
 \begin{itemize}
     \item If $\mathcal{G}_1$ or $\mathcal{G}_2$ is a complete graph, then, for $\epsilon>0$ sufficiently small, the gradient dynamics $\dot{q} = - \nabla_q V_\mathcal{F} $, as defined above, is unstable. 
     \item Let both $\mathcal{G}_i$ be non-complete graphs. Let $s_i$ be the number of saddle interactions \emph{within} vertices in $\mathcal{G}_i$, $i=1,2$, and $s$ be the number of saddle interactions \emph{between} vertices in $\mathcal{G}_1$ and $\mathcal{G}_2$ (we recall that these saddle interactions are defined through the edge set of $\mathcal H$). If $s^2<s_1s_2$, then, for $\epsilon>0$ sufficiently small, the gradient dynamics $\dot{q} = - \nabla_q V_\mathcal{F} $, as defined above, is stable. 
 \end{itemize}
\end{Proposition}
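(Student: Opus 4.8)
The plan is to translate both dynamical claims into statements about the definiteness of $M(\epsilon)$ and then read everything off from Lemma \ref{lm:two_cc}. For the linear gradient flow $\dot q = -M(\epsilon)q$ the origin is asymptotically stable precisely when every eigenvalue of $-M(\epsilon)$ has negative real part, i.e.\ when $M(\epsilon)$ is positive definite, whereas a single negative eigenvalue $\mu$ of $M(\epsilon)$ yields an eigenvalue $-\mu>0$ of $-M(\epsilon)$ and hence an exponentially growing mode, i.e.\ instability. So both items reduce to tracking the signs of the eigenvalues of $M(\epsilon)$ for $0<\epsilon\ll1$. By Proposition \ref{prop:positive_stay_positive} the $n-2$ eigenvalues of $L_\mathcal{G}$ that are already strictly positive stay positive, so only the two eigenvalues bifurcating from the double zero eigenvalue of $L_\mathcal{G}$ (here $r=2$) can change sign; to leading order these equal $\epsilon\,\theta_{1,2}$, with $\theta_{1,2}$ the eigenvalues of the $2\times2$ matrix \eqref{eq:two_dim_matrix} of Lemma \ref{lm:two_cc}, i.e.\ $a=2\#\mathcal{E}_\mathcal{H}^{(\mathcal{G}_1)}/\#\mathcal{V}_{\mathcal{G}_1}$, $b=2\#\mathcal{E}_\mathcal{H}^{(\mathcal{G}_2)}/\#\mathcal{V}_{\mathcal{G}_2}$ and $c=\#\mathcal{E}_\mathcal{H}^{(\mathcal{G}_1,\mathcal{G}_2)}/\sqrt{\#\mathcal{V}_{\mathcal{G}_1}\#\mathcal{V}_{\mathcal{G}_2}}$.

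For the second item, I would set $s=\#\mathcal{E}_\mathcal{H}^{(\mathcal{G}_1,\mathcal{G}_2)}$, $s_1=\#\mathcal{E}_\mathcal{H}^{(\mathcal{G}_1)}$, $s_2=\#\mathcal{E}_\mathcal{H}^{(\mathcal{G}_2)}$, which is exactly the notation of the statement. The hypothesis $s^2<s_1 s_2$ immediately gives $s^2<4s_1 s_2$, which is the positive-definiteness criterion of Lemma \ref{lm:two_cc}. Hence $M(\epsilon)$ is positive definite for $\epsilon$ sufficiently small, and by the dictionary above the origin is asymptotically stable. (The factor $4$ shows $s^2<s_1 s_2$ is sufficient but not sharp, the sharp threshold being $s^2<4s_1 s_2$.)

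For the first item, the combinatorial key is that completeness of a component forces its internal saddle edges to vanish: if $\mathcal{G}_1$ is complete then every pair of vertices of $\mathcal{V}_{\mathcal{G}_1}$ is already an edge of $\mathcal{G}$, so by $\mathcal{E}_\mathcal{G}\cap\mathcal{E}_\mathcal{H}=\emptyset$ no edge of $\mathcal{H}$ can join two vertices of $\mathcal{G}_1$, i.e.\ $s_1=0$ (and symmetrically for $\mathcal{G}_2$). Taking $\mathcal{G}_1$ complete gives $a=0$, so
\[
\theta_2=\frac{b-\sqrt{b^2+4c^2}}{2},
\]
which is strictly negative as soon as $c\neq 0$. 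Thus one eigenvalue of $M(\epsilon)$ becomes negative for small $\epsilon>0$, $-M(\epsilon)$ acquires a positive eigenvalue, and the origin is unstable.

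The step I expect to be most delicate is the degenerate subcase $c=0$ of the first item, i.e.\ when the complete component carries no saddle edge to $\mathcal{G}_2$ and all of $\mathcal{E}_\mathcal{H}$ lies inside $\mathcal{G}_2$. Then $\theta_2=0$ and the first-order analysis is inconclusive; in fact the $\mathcal{G}_1$-block of $M(\epsilon)$ decouples as the Laplacian of a complete graph, whose zero eigenvalue persists for all $\epsilon$, while the $\mathcal{G}_2$-block is positive definite for small $\epsilon$ by Corollary \ref{cor:one_cc}. Hence $M(\epsilon)$ stays positive semidefinite with a one-dimensional kernel, producing a center direction, so the flow is Lyapunov- but not asymptotically stable. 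To make the blanket claim \enquote{unstable} correct one should therefore either read it as \enquote{fails to be asymptotically stable}, or add the mild hypothesis that at least one saddle edge is incident to the complete component; I would state this explicitly. Everything else is a direct application of Lemma \ref{lm:two_cc}, Corollary \ref{cor:one_cc}, and the stability dictionary of the first paragraph.
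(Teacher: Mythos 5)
Your proof is correct and follows exactly the route the paper intends: Proposition \ref{prop:2dyn} is presented there as \enquote{a particular application of Lemma \ref{lm:two_cc}} with no separate proof written out, and your stability dictionary for $\dot q=-M(\epsilon)q$ together with the combinatorial observation that completeness of a component $\mathcal{G}_i$ forces $\#\mathcal{E}_\mathcal{H}^{(\mathcal{G}_i)}=0$ (by $\mathcal{E}_\mathcal{G}\cap\mathcal{E}_\mathcal{H}=\emptyset$) is precisely the intended argument. Two of your refinements go beyond what the paper records, and both are sound. First, the hypothesis $s^2<s_1s_2$ is indeed stronger than necessary: the sharp first-order criterion from Lemma \ref{lm:two_cc} is $s^2<4s_1s_2$, so the paper's condition is sufficient but not tight. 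Second, and more importantly, your degenerate subcase is a genuine qualification that the statement needs. If, say, $\mathcal{G}_1$ is complete and no edge of $\mathcal{H}$ is incident to $\mathcal{G}_1$ (so $s_1=s=0$ and all saddle edges lie inside $\mathcal{G}_2$), then $M(\epsilon)$ splits as the direct sum of $L_{\mathcal{G}_1}$ and $L_{\mathcal{G}_2}+\epsilon A_\mathcal{H}|_{\mathcal{G}_2}$; for small $\epsilon$ the second block is positive definite by Corollary \ref{cor:one_cc}, so $M(\epsilon)$ is positive semidefinite with a one-dimensional kernel and the linear flow is Lyapunov stable, not unstable. Lemma \ref{lm:two_cc} only yields \enquote{not positive definite} in this situation, which does not imply instability, so the first bullet is literally correct only if \enquote{unstable} is read as \enquote{not asymptotically stable}, or if one adds your hypothesis that at least one saddle edge is incident to the complete component. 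Flagging and resolving this case is an improvement on the paper's implicit argument.
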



\begin{Remark}
    We emphasise that a similar result as in Proposition \ref{prop:2dyn} can be stated for more than two connected components, see Corollary \ref{cor:generic_pert}.
\end{Remark}


Increasing $\epsilon$ leads to possible transitions of stability, see Proposition \ref{prop:transitions}. Moreover, when $\epsilon$ increases the behaviour induced by the adjacency matrix will become more and more dominant. In turn, this would imply an increasing predominance of the unstable interactions associated to the negative eigenvalues of $A_\mathcal{H}$. Essentially, the dimension of the unstable eigenspace grows as $\epsilon$ increases. For such reasons we state the following conjecture.

\begin{Conjecture}\label{conj:monotone}
    The signature of the matrix $M(\epsilon)$ is a monotonic decreasing function of $\epsilon$.
\end{Conjecture}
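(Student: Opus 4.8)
The plan is to follow the eigenvalue branches of $M(\epsilon)$ and to show that, as $\epsilon$ increases, no eigenvalue can cross zero from negative to positive. As already observed in the proof of Proposition \ref{prop:transitions}, the signature $s(M(\epsilon))$ can change only at the isolated values of $\epsilon$ where $M(\epsilon)$ is singular. If every such crossing is \emph{downward}, i.e. an eigenvalue passing from positive to negative, then $N_+$ only decreases and $N_-$ only increases with $\epsilon$, so that $s=N_+-N_-$ is non-increasing, which is precisely the assertion of the conjecture.

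The core of the argument is a sign computation for the derivative of an eigenvalue at a zero crossing. Suppose $\mu(\epsilon_0)=0$ for some $\epsilon_0>0$ with unit eigenvector $v$, so that $(L_\mathcal{G}+\epsilon_0 A_\mathcal{H})v=0$ and hence $A_\mathcal{H}v=-\epsilon_0^{-1}L_\mathcal{G}v$. The first-order perturbation (Hellmann--Feynman) formula then gives
\[
\mu'(\epsilon_0)=\langle A_\mathcal{H}v,v\rangle=-\frac{1}{\epsilon_0}\langle L_\mathcal{G}v,v\rangle\le 0 ,
\]
where the inequality uses that $L_\mathcal{G}$ is positive semidefinite and $\epsilon_0>0$. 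This is the heart of the matter: positive semidefiniteness of the diffusive (Laplacian) part is exactly what forbids eigenvalues from climbing back through zero. The inequality is strict unless $L_\mathcal{G}v=0$, which together with $M(\epsilon_0)v=0$ forces $A_\mathcal{H}v=0$ as well, i.e. $v\in K_0:=\ker L_\mathcal{G}\cap\ker A_\mathcal{H}$. The subspace $K_0$ is independent of $\epsilon$ and consists of permanent zero eigenvectors of $M(\epsilon)$, contributing only to $N_0$ and never to $N_\pm$. Since $K_0$ is invariant under both $L_\mathcal{G}$ and $A_\mathcal{H}$ and $M(\epsilon)$ is symmetric, $K_0^\perp$ is $M(\epsilon)$-invariant and $s(M(\epsilon))=s(M(\epsilon)|_{K_0^\perp})$. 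On $K_0^\perp$ the displayed inequality is \emph{strict}, so every zero crossing there has strictly negative derivative and is thus a simple downward crossing; summing the resulting non-increasing contributions over the branches yields the monotone decrease of the signature.

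The main obstacle is rigour at parameter values where eigenvalues are degenerate or collide, since there the sorted eigenvalue functions are merely Lipschitz and the Hellmann--Feynman identity requires justification. I would resolve this with analytic perturbation theory: because $M(\epsilon)=L_\mathcal{G}+\epsilon A_\mathcal{H}$ is an affine, hence real-analytic, family of real symmetric matrices in the single parameter $\epsilon$, Rellich's theorem supplies globally real-analytic eigenvalue branches $\mu_i(\epsilon)$ with real-analytic orthonormal eigenvectors $v_i(\epsilon)$. For each such branch $\mu_i'(\epsilon)=\langle A_\mathcal{H}v_i(\epsilon),v_i(\epsilon)\rangle$ holds at every $\epsilon$, so the sign computation applies verbatim at crossings, degenerate ones included. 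Analyticity also closes the two remaining loopholes on $K_0^\perp$: a branch cannot sit at zero on an interval, since that would force $\langle A_\mathcal{H}v_i,v_i\rangle\equiv 0$ there, contradicting strict negativity; and a higher-order tangency (touching zero and returning upward) is impossible because the first derivative at any zero is already strictly negative. I expect the degenerate-collision bookkeeping to be the delicate step, and the analytic-branch viewpoint is exactly what makes it clean; an equivalent but more hands-on route is to study the compression of $A_\mathcal{H}$ to $\ker M(\epsilon_0)$, which the inequality shows is negative definite on $K_0^\perp$, and then to verify that this first-order picture is not overturned at higher order.
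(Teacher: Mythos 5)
You are proving a statement that the paper itself does not prove: Conjecture \ref{conj:monotone} is left open, supported only by numerical experiments (Figures \ref{fig:tab_random_graphs}, \ref{fig:tab_random_matrices}) and a heuristic about $A_\mathcal{H}$ dominating for large $\epsilon$. So there is no proof of the authors' to compare against; what matters is whether your argument stands on its own, and as far as I can check, it does. The chain of steps is sound: Rellich's theorem applies because $\epsilon \mapsto M(\epsilon)$ is an affine (hence real-analytic) one-parameter family of real symmetric matrices, giving globally analytic eigenvalue branches with analytic orthonormal eigenvectors; the Hellmann--Feynman identity $\mu_i'(\epsilon)=\langle A_\mathcal{H} v_i(\epsilon),v_i(\epsilon)\rangle$ is legitimate for such branches even at collisions; and at any zero crossing $\mu_i(\epsilon_0)=0$ with $\epsilon_0>0$ one indeed gets $\mu_i'(\epsilon_0)=-\epsilon_0^{-1}\langle L_\mathcal{G}v_i,v_i\rangle\le 0$. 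Your splitting $\mathbb{R}^n=K_0\oplus K_0^\perp$ with $K_0=\ker L_\mathcal{G}\cap\ker A_\mathcal{H}$ is exactly the right device to make the inequality strict: both subspaces are $M(\epsilon)$-invariant for every $\epsilon$, so Rellich can be applied blockwise, the $K_0$-branches are identically zero, and every crossing of a $K_0^\perp$-branch at $\epsilon_0>0$ has strictly negative slope. An analytic function whose derivative is strictly negative at each of its zeros has at most one zero on $(0,\infty)$ and a non-increasing sign, so each branch's contribution to $s(M(\epsilon))$ is non-increasing and the conjecture (read, as it must be, as \emph{non-increasing}) follows. A good sanity check, which your proof passes: the factor $1/\epsilon_0$ makes the argument break down precisely at $\epsilon_0=0$, and indeed the signature genuinely jumps \emph{upward} there (Corollary \ref{cor:one_cc}), so any proof had to use $\epsilon>0$ essentially.

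One remark: the same conclusion can be reached by a shorter, purely algebraic route that avoids perturbation theory entirely. For $\epsilon>0$, multiplying by the positive scalar $1/\epsilon$ is a congruence, so $s(M(\epsilon))=s\bigl(A_\mathcal{H}+\delta L_\mathcal{G}\bigr)$ with $\delta=1/\epsilon$. By Weyl's monotonicity principle, adding the positive semidefinite matrix $(\delta_2-\delta_1)L_\mathcal{G}$ cannot decrease any sorted eigenvalue, so every $\lambda_k(A_\mathcal{H}+\delta L_\mathcal{G})$ is non-decreasing in $\delta$; hence $N_+$ is non-decreasing and $N_-$ non-increasing in $\delta$, so $\delta\mapsto s(A_\mathcal{H}+\delta L_\mathcal{G})$ is non-decreasing, and composing with the decreasing map $\epsilon\mapsto 1/\epsilon$ gives that $s(M(\epsilon))$ is non-increasing on $(0,\infty)$. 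This argument uses only that $L_\mathcal{G}\succeq 0$ and $A_\mathcal{H}$ is symmetric, so it also settles the paper's broader numerical observation for matrices $A+\epsilon B$ with $A$ positive semidefinite. Your analytic-branch proof is worth keeping nonetheless, since it yields strictly more information: each eigenvalue branch on $K_0^\perp$ crosses zero at most once, which sharpens the transition count of Proposition \ref{prop:transitions}, not just the monotonicity of the signature.
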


We corroborate the statement of Conjecture \ref{conj:monotone} by a numerical investigation, Figure \ref{fig:tab_random_graphs}. Moreover, we can observe that the conjecture can be extended to any matrix of the form $A+ \epsilon B$, where $A$ is a symmetric real positive semidefinite matrix, and $B$ is a symmetric real matrix, Figure \ref{fig:tab_random_matrices}. Let us notice that if we assume the conjecture to be correct, we can find another bound for the number of transitions of the signature of $M(\epsilon)$, complementing the result of Proposition \ref{prop:transitions}. In fact, for the limit $\epsilon \to \infty$ the signature of $M(\epsilon)$ becomes the signature of the adjacency matrix $A_\mathcal{H}$, which we denote by $s(A_\mathcal{H})$. In the regime of $\epsilon \gg 1$, we can consider the perturbation problem  $A_\mathcal{H} + (1/\epsilon) L_\mathcal{G}$. Let us recall that $A_\mathcal{H}$ can have some zero eigenvalues that under the perturbation of $L_\mathcal{G}$ will generically perturb to some nonzero real number. Since we look for an upper bound in the number of transitions, we should consider the case where all the zero eigenvalues become negative under perturbation. So, the upper bound is
\begin{equation}\label{eq:upp_after}
    n- s(A_\mathcal{H})+\dim(\ker(A_\mathcal{H})) .
\end{equation}
Then, combining \eqref{eq:upp_after} and \eqref{eq:upperbound} we have that an upper bound for the number of transitions of signature is given by
\begin{equation}
    \min\left\{ n-\dim(\ker(A_{\mathcal{H}})) ,  n- s(A_\mathcal{H})+\dim(\ker(A_\mathcal{H})) \right\} .
\end{equation}
The best bound is obtained by examining these two conditions
\begin{align}
    &\text{if} \  2 \dim(\ker(A_{\mathcal{H}})) > s(A_\mathcal{H}) \ \implies \ n-\dim(\ker(A_{\mathcal{H}})) ,\\
    &\text{if} \  2 \dim(\ker(A_{\mathcal{H}})) < s(A_\mathcal{H}) \ \implies \ n- s(A_\mathcal{H})+\dim(\ker(A_\mathcal{H})) ,
\end{align}
and, of course, if the equality $ 2 \dim(\ker(A_{\mathcal{H}})) = s(A_\mathcal{H})$ holds then they are both equal. In Figure \ref{fig:bounds_transitions} we show a comparison between the number of transitions detected and the upper bounds we derived.

\begin{figure}[ht]
\centering
\includegraphics[width=\textwidth]{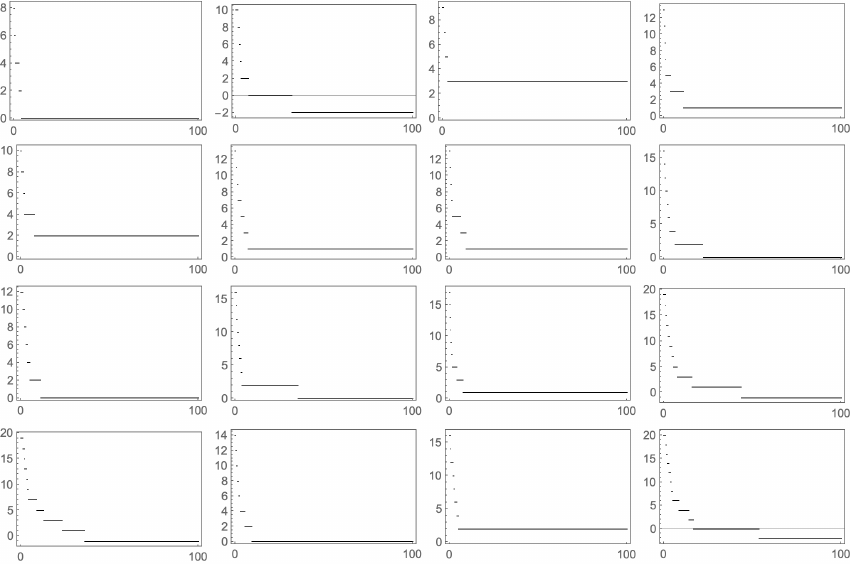}
\caption{ A sample of $16$ plots displaying the signature of the matrix $M(\epsilon)$ on the vertical axis with respect to the parameter $\epsilon$ on the horizontal axis. 
The plots are obtained by generating a random graph with a random number of nodes and edges in the ranges $\# \mathcal{V} \in [10,20]$ and $\# \mathcal{E} \in [2 \# \mathcal{V},\# \mathcal{V}(\# \mathcal{V}-1)/2]$. All distributions are uniform. The parameter $\epsilon$ goes from $1/100$ to $100$, with a sampling rate of $1/100$.
}
\label{fig:tab_random_graphs}
\end{figure}

\begin{figure}[ht]
\centering
\includegraphics[width=\textwidth]{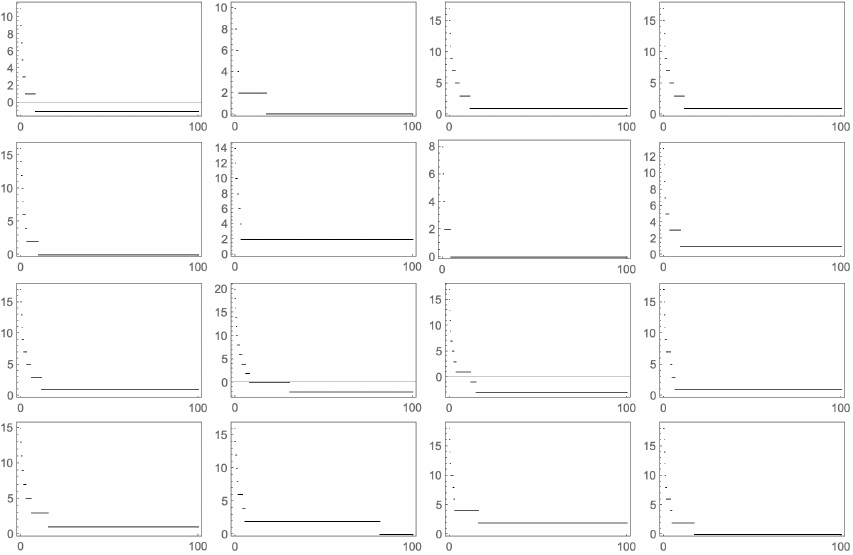}
\caption{ A sample of $16$ plots, signature vs $\epsilon$, for matrices of the from $A+ \epsilon B$, where $A$ is a symmetric real positive semidefinite matrix, and $B$ is a symmetric real matrix. We first generate two square matrices $a$ and $b$, where the entries are random reals uniformly distributed in the interval $[-1,1]$. The dimension of the matrices is given by a random integer between $10$ and $20$, the distribution is once again uniform. In order to obtain the aforementioned properties for the matrices we set $A= a a^\intercal$ and $B=b +b^\intercal$. The parameter $\epsilon$ goes from $1/100$ to $100$, with a sampling rate of $1/100$. }
\label{fig:tab_random_matrices}
\end{figure}

\begin{figure}[ht]
\centering
\includegraphics[width=.7\textwidth]{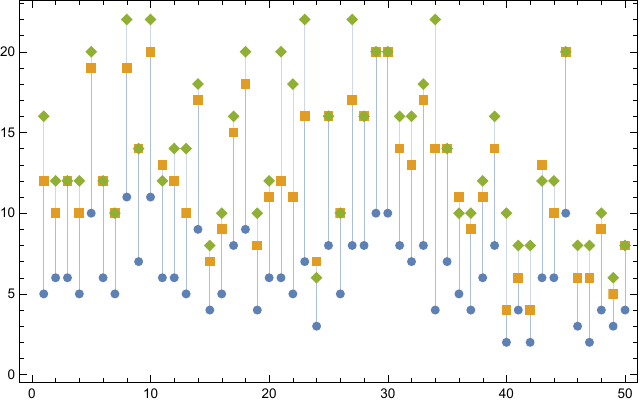}
\caption{Numerical visualisation and comparison between the number of transitions detected and the upper bounds we derived. We employed $50$ randomly generated graphs with a random number of nodes $\# \mathcal{V}$ in the interval $[5, 20]$, and a random number edges $\# \mathcal{E}$ in the interval $[2 \# \mathcal{V}, \mathcal{V} (\mathcal{V}-1)/2]$. All distributions are uniform. For each edge of the graph, we assigned with probability $1/2$ a class that would correspond respectively to the Laplacian or the adjacency structure. The values of $\epsilon$ considered range in the interval $[1/100,100]$ with a sampling rate of $1/100$. The vertical axis shows the number of transitions, while the horizontal one covers the test performed. The blue circles represent the effective number of transitions detected; the orange squares and the green rhombuses represent respectively the bound $n-\dim(\ker(A_{\mathcal{H}}))$ and $n- s(A_\mathcal{H})+\dim(\ker(A_\mathcal{H}))$. 
}
\label{fig:bounds_transitions}
\end{figure}
\section{Outlook}

In this paper, we introduced the matrix \eqref{eq:matrix_M}, that we called $M(\epsilon)$, representing a mixing between the Laplacian and Adjacency matrix. We studied how some spectral properties are related to the graph's configurations. In turn, we exploited the connection with networks of dynamical systems, in particular with gradient systems. Similarly, one can also derive a connection with Hamiltonian systems, where the stability properties are affected by the symplectic structure. Of course, there are possible generalisations of the definitions we stated. Potential directions include considering other graph structures, e.g., directed graphs, or hypergraphs, but also other combinations of known algebraic structures. It might also be possible to explore in more detail other properties of the matrix $M(\epsilon)$, such as its eigenvectors. Moreover, it would be quite significant to prove or further understand Conjecture \ref{conj:monotone}.

\vspace{6pt} 



\appendix
\section{Factorisation of the product $\prod_i (x_i + \epsilon y_i)$ }\label{app:prod_fac}

We reserve this appendix for the factorisation in powers of $\epsilon$ of the product
\begin{equation}\label{eq:prod}
    \prod_{i=1}^n (x_i + \epsilon y_i).
\end{equation}
The factorisation turns out suitable for the computation of the determinant of $M(\epsilon)$, see Section \ref{sec:bif}. In general, the expansion proposed provide the standard polynomial-in-$\epsilon$ form for the product \eqref{eq:prod}.

\begin{Proposition}
    The product \eqref{eq:prod} can be rewritten as a polynomial of degree $n$ in $\epsilon$ of the form
    \begin{equation}
        \prod_{i=1}^n (x_i + \epsilon y_i) = \sum_{r=0}^n \epsilon^r C_{n,r}(x_1, \dots, x_n; y_1, \dots, y_n) ,
    \end{equation}
    where $C_{n,r}(x_1, \dots, x_n; y_1, \dots, y_n)$ is defined as
    \begin{equation}\label{eq:c_def}
        C_{n,r}(x_1, \dots, x_n; y_1, \dots, y_n):= \underbrace{ x_1 \cdots x_{n-r} y_{n-r+1} \cdots y_n + \dots + y_1 \cdots y_r x_{r+1} \cdots x_{n}}_{\binom{n}{r} \ \text{possible combinations}} .
    \end{equation}
\end{Proposition}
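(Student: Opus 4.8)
The plan is to prove the identity by directly expanding the product via distributivity, indexing the resulting monomials by subsets of $\{1,\dots,n\}$. The starting observation is that, upon expanding \eqref{eq:prod} by the distributive law, each term arises from selecting, independently for every factor $i$, either the summand $x_i$ or the summand $\epsilon y_i$. Such a selection is encoded by a subset $S\subseteq\{1,\dots,n\}$, with the convention that $i\in S$ means the factor $\epsilon y_i$ was chosen. The monomial produced by $S$ is $\epsilon^{|S|}\prod_{i\in S} y_i \prod_{i\notin S} x_i$, and summing over all subsets gives
\begin{equation}
    \prod_{i=1}^n (x_i + \epsilon y_i) = \sum_{S\subseteq\{1,\dots,n\}} \epsilon^{|S|} \prod_{i\in S} y_i \prod_{i\notin S} x_i .
\end{equation}

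Next I would collect terms according to the cardinality $r=|S|$. Rewriting the outer sum as a sum over $r$ from $0$ to $n$ together with an inner sum over subsets of fixed size $r$ yields
\begin{equation}
    \prod_{i=1}^n (x_i + \epsilon y_i) = \sum_{r=0}^n \epsilon^r \Bigg( \sum_{\substack{S\subseteq\{1,\dots,n\}\\ |S|=r}} \prod_{i\in S} y_i \prod_{i\notin S} x_i \Bigg) .
\end{equation}
The key step is then to recognise that the bracketed inner sum is exactly $C_{n,r}(x_1,\dots,x_n;y_1,\dots,y_n)$ as defined in \eqref{eq:c_def}: it ranges over all ways of choosing which $r$ of the indices contribute a $y$ and which $n-r$ contribute an $x$, and there are precisely $\binom{n}{r}$ such choices, matching the count indicated under the brace in \eqref{eq:c_def}.

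Alternatively, the same identity follows by induction on $n$: multiplying the degree-$n$ expansion by the factor $(x_{n+1}+\epsilon y_{n+1})$ and collecting powers of $\epsilon$ gives the Pascal-type recurrence $C_{n+1,r}=x_{n+1}C_{n,r}+y_{n+1}C_{n,r-1}$, with the boundary conventions $C_{n,0}=x_1\cdots x_n$ and $C_{n,n}=y_1\cdots y_n$. This recurrence mirrors the subset decomposition one index at a time, according to whether $n+1\in S$ or not, and reproduces the closed form above.

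Since the statement is elementary, there is no genuine analytic obstacle; the only point requiring care is notational. The expression in \eqref{eq:c_def} writes $C_{n,r}$ as a sum whose first and last terms are displayed explicitly, so the actual content of the proof is to justify that the ellipsis denotes the full sum over all $\binom{n}{r}$ subsets. Making this indexing rigorous — setting up the subset bijection and verifying the binomial count — is the main thing to get right, and everything else is a routine application of distributivity.
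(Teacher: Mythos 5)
Your proof is correct and follows essentially the same route as the paper's: the paper expands the product and indexes the $2^n$ resulting monomials by binary tuples $(k_1,\dots,k_n)\in\{0,1\}^n$, then groups terms by the power $r = n - \sum_i k_i$, which is exactly your subset indexing $S=\{i : \epsilon y_i \text{ chosen}\}$ with $r=|S|$ in different notation. The inductive argument you sketch as an alternative is not in the paper, but the core distributivity-plus-grouping argument coincides.
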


\begin{proof}
    We start by expanding \eqref{eq:prod} as follows
    \begin{align}
        \prod_{i=1}^n (x_i + \epsilon y_i) &= \sum_{k_1, \dots, k_n =0}^1 \prod_{i=1}^n x^{k_i} (\epsilon y)^{1-k_i} \\
        &= \sum_{k_1, \dots, k_n =0}^1 \epsilon^{n- \sum_{i=1}^n k_i} \prod_{i=1}^n x^{k_i} y^{1-k_i} .
    \end{align}
    Notice that the exponents $k_i$ and $1-k_i$ are combinations of $0$ and $1$. So the sum $\sum_{k_1, \dots, k_n =0}^1$ gives the $2^n$ combinations of products $\prod_{i=1}^n x^{k_i} y^{1-k_i}$. We group together the combinations that have the same power of $\epsilon$, i.e., $n- \sum_{i=1}^n k_i =r$, $r=0,\dots, n$. For a given $r$ we have that there are $\binom{n}{r}$ products that are non-repeated combinations of $n-r$ elements of $\{ x_1, \dots, x_n\}$ and $r$ elements of  $\{ y_1, \dots, y_n\}$.
\end{proof}


\bibliography{bibliography.bib}
\bibliographystyle{abbrv}

\section*{Statements and Declarations}

\begin{description}
    \item[\textbf{Funding}] The authors declare that no funds, grants, or other support were received during the preparation of this manuscript.
    \item[\textbf{Competing Interests}] The authors have no relevant financial or non-financial interests to disclose.
    \item[\textbf{Data Availability}] All datasets are available upon request to the corresponding author.
\end{description}

\end{document}